\documentclass[ECP]{ejpecp}

\SHORTTITLE{A note on tamed Euler approximations}

\TITLE{A note on tamed Euler approximations}

\AUTHORS{%
  Sotirios~Sabanis\footnote{University of Edinburgh, Scotland, UK.
    \EMAIL{s.sabanis@ed.ac.uk}}}

\KEYWORDS{Euler approximations; rate of convergence; local Lipschitz
condition; monotonicity condition}

\AMSSUBJ{60H35}

\SUBMITTED{May 21, 2013}

\ACCEPTED{May 31, 2013}

\VOLUME{18}

\YEAR{2013}
\PAPERNUM{47}
\DOI{v18-2824}

\ABSTRACT{Strong convergence results on tamed Euler schemes, which
approximate stochastic differential equations with superlinearly
growing drift coefficients that are locally one-sided Lipschitz
continuous, are presented in this article. The diffusion
coefficients are assumed to be locally Lipschitz continuous and have
at most linear growth. Furthermore, the classical rate of
convergence, i.e. one--half, for such schemes is recovered  when the
local Lipschitz continuity assumptions are replaced by global and,
in addition, it is assumed that the drift coefficients satisfy
polynomial Lipschitz continuity.}

\def \one{\ \hbox{I\hskip-.60em 1}}

\begin{document}

\section{Introduction}

It is a well-known result that a stochastic differential equation
(SDE) with a superlinearly growing drift coefficient has a unique
solution if, its drift and diffusion coefficients satisfy a suitable
monotone growth condition, so-called coercivity, and a linear growth
condition respectively. Typically, suitable local Lipschitz
continuity conditions are also required of the coefficients. One
could refer to Krylov \cite{Krylov-paper} and the references therein
for more details. Moreover, the almost sure convergence and
convergence in probability of the corresponding (explicit) Euler
approximations were proved by Gy\"{o}ngy \cite{Gyongy}. However,
Hutzenthaler, Jentzen and Kloeden \cite{HJK-div} showed recently
that the absolute moments of (the aforementioned) Euler
approximations at a finite time could diverge to infinity as implied
in Higham, Mao and Stuart \cite{HMS}. In other words, the essential
property of uniform integrability may not hold for such sequences.
Thus, one could not obtain results on strong (in an $\mathcal
L^p$-sense) approximations although such results exist in the cases
of almost sure convergence and convergence in probability. One
further realises that the introduction of accelerated Monte Carlo
schemes provides a strong incentive for the study of strong
approximations of SDEs since, results on the latter are required for
the efficient implementation of the former. More information on this
topic can be found in Gile's seminal paper \cite{Giles}, Giles and
Szpruch \cite{Giles-Szpruch} and the references therein.

Recently Hutzenthaler, Jentzen and Kloeden \cite{HJK-converge}
introduced the notion of tamed Euler schemes in which the drift term
is modified so that it is uniformly bounded. With such an approach,
they are able to prove that the tamed Euler scheme converges
strongly (with rate one-half) to the exact solution of the SDE if
the drift coefficient is globally one-sided Lipschitz continuous and
has a derivative which grows (at most) polynomially. In addition,
they assume that the diffusion coefficient of the SDE satisfies a
global Lipschitz condition and it grows at most linearly.
Furthermore, they offer a detailed review of the use of implicit
schemes and compare them with tamed Euler approximations. Their
comparison demonstrates that the implementation of implicit schemes
requires significantly more computational effort than the tamed
version.

One however observes in \cite{HJK-converge} that for the ``taming"
of the drift coefficient, the term $n^{-1}$ is used when it is known
from classical literature that the standard strong convergence rate
is one-half. In other words, one expects that the use of $n^{-1/2}$
should be sufficient in order to control the drift coefficient and
achieve strong convergence of the numerical scheme. In this article,
a generalisation of the results of Hutzenthaler, Jentzen and Kloeden
\cite{HJK-converge} is presented by using a variant of their tamed
Euler method while a simpler proof is provided. It is proved that,
even when global Lipschitz continuity conditions are replaced by
local conditions, the tamed Euler scheme converges in $\mathcal L^p$
to the exact solution of the SDE. Moreover, as a consequence of the
aforementioned generalisation, the classical rate of convergence is
obtained under the same assumptions as in \cite{HJK-converge}. In
fact, one further observes that the use of $n^{-\alpha}$, where
$\alpha \in (0, 1/2]$, is also suitable  for proving $\mathcal L^p$
convergence of such tamed schemes, e.g. see Theorem \ref{mainthm}
below. Naturally, this implies that the proposed tamed
coefficients/schemes belong to a large class of functions/schemes
which satisfy certain properties. For example, \eqref{uni_bound}
from below represents a suitable condition on tamed coefficients so
as to achieve uniform moment bounds. Similarly, Hutzenthaler and
Jentzen \cite{HJ} offer results on a class of suitably ``tamed"
numerical schemes by applying space truncation techniques, e.g.
corollary 2.19 in \cite{HJ}.

We conclude this section by introducing some basic notation. The
norm of a vector $x \in \mathbb R^d$ and the Hilbert-Schmidt norm of
a matrix $A\in \mathbb R^{d\times m}$ are respectively denoted by
$|x|$ and $|A|$. The transpose of a matrix $A \in \mathbb R^{d\times
m}$ is denoted by $A^{T}$ and the scalar product of two vectors $x,
y \in \mathbb R^d$ is denoted by $xy$. The integer part of a real
number $x$ is denoted by $[x]$. Moreover, $\mathcal L^p=\mathcal
L^p(\Omega, \mathcal F, \mathbb P)$ denotes the space of random
variables $X$ with a norm $\|X\|_p:=\big(\mathbb
E\big[|X|^p\big]\big)^{1/p} < \infty$ for $p>0$. Finally,  $\mathcal
B(V)$ denotes the $\sigma$-algebra of Borel sets of a topological
space $V$.

\section{Main Result}

Let $(\Omega, \{\mathcal F_t\}_{\{t \geq 0\}}, \mathcal F, \mathbb
P)$  be a filtered probability space satisfying the usual
conditions, i.e. the filtration is increasing, right continuous and
complete. Let $\{W(t)\}_{\{t \geq 0\}}$ be an $m$-dimensional Wiener
martingale. Furthermore, it is assumed that $b(t,x)$ and
$\sigma(t,x)$ are $\mathcal B(\mathbb R_+) \otimes
\mathcal{B}(\mathbb R^d)$-measurable functions which take values in
$\mathbb R^d$ and $\mathbb R^{d \times m}$ respectively. For a fixed
$T>0$, let us consider an SDE given by
\begin{eqnarray}
dX(t)=b(t, X(t))dt+\sigma(t,X(t))dW(t), \qquad \forall  \, t\in
[0,\, T], \label{sde}
\end{eqnarray}
with initial value $X(0)$ which is an almost surely finite $\mathcal
F_0$-measurable random variable.

For every $n \geq 1$, and any $t \in [0,T]$, the
following tamed Euler scheme is defined
\begin{eqnarray}
dX_n(t)=b_n(t,X_n(\kappa_n(t)))dt+\sigma(t,X_n(\kappa_n(t)))dW(t)
\label{tem}
\end{eqnarray}
with the same initial value $X(0)$ as SDE \eqref{sde} and
$\kappa_n(t):=[nt]/n$. Moreover, it is assumed that
\begin{equation}
b_n(t,x):= \frac{1}{1+n^{-\alpha}|b(t,x)|}b(t,x),
\end{equation}
for any $t \in [0,T]$, $x \in \mathbb R^d$ and $\alpha
 \in (0, 1/2]$. One then observes that
\begin{equation} \label{uni_bound}
|b_n(t,x)|\le \min(n^{\alpha},|b(t,x)|).
\end{equation}
Moreover, for every $n \geq 1$, one deduces immediately that
$b_n(t,x)$ is a $\mathcal B(\mathbb R_+) \otimes \mathcal B(\mathbb
R^d)$-measurable function which take values in $\mathbb R^d$.

Let $\mathbb{L}^p$ denote the set of nonnegative $p$-th integrable
functions on $[0,\, T]$, i.e. to say if $f \in \mathbb{L}^p$ then
$$
\int_0^T |f(t)|^p dt < \infty.
$$
We make the following assumptions.
\newline

\textbf{A-1.} There exists a positive constant $K$ such that,
$$
2xb(t,x) \vee |\sigma(t,x)|^2 \leq K(1+|x|^2)
$$
for any $t \in [0,T]$ and $x \in \mathbb R^d$.

\textbf{A-2.} For every $R>0$, there exists a positive constant
$L_R$ such that, for any $t \in [0,T]$,
\begin{eqnarray}
 2(x-y)(b(t,x)-b(t,y)) \vee  |\sigma(t,x)-\sigma(t,y)|^2 \leq L_R |x-y|^2  \nonumber
\end{eqnarray}
 for all $|x|,  |y| \leq R$.

\textbf{A-3.} For every $R \ge 0$ and $p>0$, there exists $N_{R}\in
\mathbb{L}^p$, such that
$$
\sup_{|x|\le R}|b(t,x)| \le N_R(t)
$$
for any $t \in [0,T]$.

\textbf{A-4.} For every $p>0$, $\mathbb{E}[|X(0)|^p]<\infty$.

\begin{remark}
Note that due \eqref{uni_bound}, for each $n \ge 1$, the norm of
$b_n$ is a bounded function of $t$ and $x$ and, due to \textbf{A-1},
the norm of $\sigma$ has at most linear growth. This fact guarantees
the existence of a unique solution to \eqref{tem}. Moreover, it
guarantees that for each $n\ge 1$, all moments exist, each of which
is bounded above by some value that depends on $n$, i.e.
\begin{equation} \label{n_bound}
\sup_{0\le t\le T}\mathbb{E}[|X_n(t)|^p] \le N
\end{equation}
for any $p>0$, where $N:=N(n, p, T, \mathbb{E}|X(0)|^p)$ is a
positive constant.
\end{remark}

\begin{theorem}\label{mainthm}
Suppose \textbf{A-1} -- \textbf{A-4} hold, then the tamed Euler
scheme \eqref{tem} converges to the true solution of SDE \eqref{sde}
in $\mathcal L^p$-sense, i.e.
\begin{eqnarray}
\lim_{n \rightarrow \infty}\mathbb E \Bigg[ \sup_{0 \leq t \leq T}|X(t)-X_n(t)|^p\Bigg]=0 \notag
\end{eqnarray}
for all $p >0 $.
\end{theorem}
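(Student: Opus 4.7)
The strategy is to reduce the problem to a classical Gronwall-type estimate on a stopping-time-localised event, then upgrade the resulting convergence in probability to convergence in $\mathcal{L}^p$ via uniform integrability. Throughout, the key structural feature is \eqref{uni_bound}: the tamed drift satisfies $|b_n(t,x)| \le \min(n^\alpha, |b(t,x)|)$, giving simultaneous access to the superlinear bound inherited from $b$ and to an explicit a priori magnitude bound of order $n^\alpha$.

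\emph{Step 1: uniform-in-$n$ moment bounds.} The crucial ingredient is
$$\sup_{n \ge 1} \sup_{0 \le t \le T} \mathbb{E}\big[|X_n(t)|^p\big] < \infty$$
for every $p \ge 2$. Applying It\^o's formula to $|X_n(t)|^{2p}$ produces a drift contribution of the form $\mathbb{E}\big[|X_n(s)|^{2p-2}\,X_n(s)\cdot b_n(s,X_n(\kappa_n(s)))\big]$. The idea is to write $X_n(s) = X_n(\kappa_n(s)) + (X_n(s) - X_n(\kappa_n(s)))$: the first summand, combined with $|b_n|\le|b|$ and the monotone estimate in \textbf{A-1}, yields a term comparable to $\mathbb{E}|X_n(\kappa_n(s))|^{2p}$. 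The second summand is the delicate one; using $|X_n(s)-X_n(\kappa_n(s))| \le n^{-1}\cdot n^\alpha + |\sigma(s,X_n(\kappa_n(s)))|\,|W(s)-W(\kappa_n(s))|$ together with $|b_n|\le n^\alpha$, H\"older's inequality and the linear growth of $\sigma$ from \textbf{A-1}, one obtains a bounded contribution precisely because $n^\alpha\cdot n^{-1/2}\le 1$ (which is the force of the constraint $\alpha\le 1/2$). Gronwall's inequality closes the argument, and Doob's maximal inequality promotes the bound to a supremum-in-time estimate.

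\emph{Step 2: localised error analysis.} Introduce the stopping time
$$\tau_R := T \wedge \inf\{t \in [0,T] : |X(t)| \vee |X_n(t)| > R\},$$
and decompose the drift discrepancy as
$$b(t,X(t)) - b_n(t,X_n(\kappa_n(t))) = \underbrace{[b(t,X(t))-b(t,X_n(t))]}_{(I)} + \underbrace{[b(t,X_n(t))-b(t,X_n(\kappa_n(t)))]}_{(II)} + \underbrace{[b(t,X_n(\kappa_n(t)))-b_n(t,X_n(\kappa_n(t)))]}_{(III)}.$$
On the event $\{t\le\tau_R\}$, term $(I)$ is controlled in the one-sided sense by \textbf{A-2}, term $(II)$ by the oscillation estimate from Step 1 together with \textbf{A-3}, and term $(III)$ by the identity $b-b_n = b\cdot n^{-\alpha}|b|/(1+n^{-\alpha}|b|)$, which gives $|b-b_n| \le n^{-\alpha}|b|^2 \le n^{-\alpha}N_R(t)^2$ thanks to \textbf{A-3}. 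An analogous decomposition treats the diffusion term via the local Lipschitz part of \textbf{A-2}. Applying It\^o's formula to $|X(t\wedge\tau_R)-X_n(t\wedge\tau_R)|^p$ and Gronwall's inequality then yields
$$\mathbb{E}\Big[\sup_{0\le t\le T}|X(t\wedge\tau_R)-X_n(t\wedge\tau_R)|^p\Big] \longrightarrow 0 \quad \text{as } n\to\infty,$$
for each fixed $R$.

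\emph{Step 3: conclusion.} Markov's inequality combined with the uniform moment bound from Step 1 (and the classical a priori moment estimate for $X$ that follows from \textbf{A-1} and \textbf{A-4}) gives $\sup_n\mathbb{P}(\tau_R<T)\to 0$ as $R\to\infty$. Splitting $\mathbb{E}\sup_{t\le T}|X(t)-X_n(t)|^p$ according to $\{\tau_R=T\}$ and $\{\tau_R<T\}$ and using H\"older's inequality together with uniform moments of order $2p$ on the latter event yields the theorem. The main obstacle is Step 1: the superlinear growth of $b$ prevents naive estimates from closing, and it is precisely the interplay between the taming factor $(1+n^{-\alpha}|b|)^{-1}$ and the constraint $\alpha\le 1/2$ that furnishes the cancellation $n^\alpha\cdot n^{-1/2}\le 1$ needed to obtain the uniform moment bounds; once these are in hand, the remaining argument is a fairly routine localisation.
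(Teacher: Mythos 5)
The high-level architecture is sound and matches the paper's (uniform moment bounds, localisation by a stopping time, Gronwall on the stopped difference, then a Markov/Young argument to remove the localisation). But the decomposition you use in Step~2 does not close under the Theorem's assumptions, and this is the core of the argument, not a detail.

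Your decomposition pairs $\chi_n(s)=X(s)-X_n(s)$ against the three differences $(I)=b(s,X(s))-b(s,X_n(s))$, $(II)=b(s,X_n(s))-b(s,X_n(\kappa_n(s)))$, $(III)=b(s,X_n(\kappa_n(s)))-b_n(s,X_n(\kappa_n(s)))$. Term $(I)$ is fine: the pairing $\chi_n\cdot(I)$ has the arguments of $b$ matching the factor $x-y$, so \textbf{A-2} applies. Term $(II)$ is the problem. Under \textbf{A-2}--\textbf{A-3} alone, you have no control on the \emph{size} of $b(s,X_n(s))-b(s,X_n(\kappa_n(s)))$: \textbf{A-2} is one-sided and only controls the product $(x-y)\,(b(s,x)-b(s,y))$, which is useless when the multiplying factor is $X(s)-X_n(s)$ rather than $X_n(s)-X_n(\kappa_n(s))$, and \textbf{A-3} only gives the $n$-independent bound $|b(s,X_n(s))-b(s,X_n(\kappa_n(s)))|\le 2N_R(s)$ on the stopped event, which does not tend to zero. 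Indeed \textbf{A-2} and \textbf{A-3} do not even force $b(t,\cdot)$ to be continuous (e.g.\ $b(x)=-\mathrm{sgn}(x)$ satisfies both), so no ``oscillation estimate'' can make $(II)$ small. Your three-way split is precisely the one the paper uses for Corollary~\ref{maincor}, equation~\eqref{new_xi_drift}, where the extra two-sided polynomial Lipschitz condition \textbf{A-5} is available to estimate $(II)$; it is not available for Theorem~\ref{mainthm}.

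The paper's proof of Theorem~\ref{mainthm} sidesteps this by regrouping, equation~\eqref{xi_drift}: it pairs $(X(s)-X_n(\kappa_n(s)))$ with $(b(s,X(s))-b(s,X_n(\kappa_n(s))))$ so that \textbf{A-2} applies directly with matched arguments; the residual term is then $(X_n(\kappa_n(s))-X_n(s))\cdot(b(s,X(s))-b_n(s,X_n(\kappa_n(s))))$, in which the \emph{drift} factor is merely bounded (by $2N_R$, via \textbf{A-3} and \eqref{uni_bound}) while the \emph{increment} factor $|X_n(s)-X_n(\kappa_n(s))|$ is small by Lemma~\ref{l1}. This regrouping is exactly the extra idea needed when only a one-sided local monotonicity is at hand, and it is the step your proposal is missing. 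Separately, your Step~1 (direct It\^o on $|X_n|^{2p}$ with the pathwise increment bound and the $n^{\alpha-1/2}\le1$ cancellation) is a legitimate alternative to the paper's two-stage bootstrap (Lemma~\ref{l2_moment_bound} for $p=2$, then Lemma~\ref{l1}, then induction in Lemma~\ref{moment_bound}), but that is a secondary difference; the gap is in Step~2.
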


\textbf{A-5.} There exists positive constants $l$ and $L$ such that,
for any $t \in [0,T]$,
\begin{eqnarray}
 (x-y)(b(t,x)-b(t,y)) \vee  |\sigma(t,x)-\sigma(t,y)|^2 \leq L |x-y|^2  \nonumber
\end{eqnarray}
and
\begin{eqnarray}
 |b(t,x)-b(t,y)|  \leq L(1+ |x|^l + |y|^l) |x-y|  \nonumber
\end{eqnarray}
for all $x,\, y \in \mathbb R^d$.

\begin{corollary}\label{maincor}
Suppose \textbf{A-1} and \textbf{A-3}--\textbf{A-5} hold, then the
tamed Euler scheme \eqref{tem} with $\alpha = 1/2$ converges to the
true solution of SDE \eqref{sde} in $\mathcal L^p$-sense with order
1/2, i.e.
\begin{eqnarray}
\mathbb E \Bigg[ \sup_{0 \leq t \leq T}|X(t)-X_n(t)|^p\Bigg] \le C
n^{-p/2} \notag
\end{eqnarray}
for all $p >0 $, where $C$ is a constant independent of $n$.
\end{corollary}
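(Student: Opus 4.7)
The plan is to quantify the $\mathcal L^p$ error by applying It\^o's formula to $|X(t)-X_n(t)|^{2p}$ (or to $|X(t)-X_n(t)|^2$, followed by a Burkholder--Davis--Gundy bootstrap to higher moments) and then to extract the rate $n^{-p/2}$ by exploiting the stronger global structure provided in \textbf{A-5}. Throughout I shall freely use the uniform moment bounds $\sup_n \sup_{t \in [0,T]} \mathbb E|X_n(t)|^q < \infty$ and $\sup_{t \in [0,T]} \mathbb E|X(t)|^q < \infty$, valid for every $q>0$; these are by-products of the arguments underlying Theorem \ref{mainthm} together with \textbf{A-1}, \textbf{A-4} and \eqref{uni_bound}.

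The decisive step is the three-way splitting
$b(s,X(s))-b_n(s,X_n(\kappa_n(s))) = D_1(s)+D_2(s)+D_3(s)$,
where $D_1(s) = b(s,X(s))-b(s,X_n(s))$, $D_2(s) = b(s,X_n(s))-b(s,X_n(\kappa_n(s)))$ and $D_3(s) = b(s,X_n(\kappa_n(s)))-b_n(s,X_n(\kappa_n(s)))$, together with the analogous two-way splitting of $\sigma(s,X(s))-\sigma(s,X_n(\kappa_n(s)))$. The $D_1$ piece (and its $\sigma$-counterpart) is absorbed by the monotonicity/Lipschitz bound of \textbf{A-5} into the Gronwall term $\int_0^t \mathbb E \sup_{u \le s}|X(u)-X_n(u)|^{2p}\, ds$. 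The $D_2$ piece is the time-discretization error; using the polynomial Lipschitz bound of \textbf{A-5} with H\"older's inequality, it reduces to the one-step moments $\mathbb E|X_n(s)-X_n(\kappa_n(s))|^q$, which are of order $n^{-q/2}$ thanks to $|b_n|\le n^{1/2}$ (from \eqref{uni_bound} with $\alpha=1/2$), Burkholder--Davis--Gundy applied to the diffusion increment, and the uniform moment bounds above. The $D_3$ piece is the taming error, which by direct algebra satisfies
\[ |b(s,x)-b_n(s,x)| = \frac{n^{-1/2}|b(s,x)|}{1+n^{-1/2}|b(s,x)|}\,|b(s,x)| \le n^{-1/2}|b(s,x)|^2, \]
and, since \textbf{A-5} with $y=0$ gives $|b(s,x)|\le N_0(s)+L(|x|+|x|^{l+1})$, is pointwise $\le C n^{-1/2}(1+N_0(s)^2+|x|^{2(l+1)})$; by \textbf{A-3} and the moment bounds it contributes $O(n^{-p})$ after taking $p$-th moments.

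The main obstacle is the careful bookkeeping of the polynomial weights $|X_n|^l$, $|X|^l$ arising in $D_2$ and of $|X_n|^{2(l+1)}$ arising in $D_3$: these must be absorbed via H\"older's inequality against the one-step moments without losing the target rate $n^{-p/2}$. Since \textbf{A-4} supplies arbitrarily high moments of $X(0)$, and the uniform bounds above inherit this, a sufficiently generous H\"older split resolves the matter. Collecting the three estimates and applying Gronwall's lemma to $\phi_n(t) := \mathbb E \sup_{u \le t}|X(u)-X_n(u)|^{2p}$ yields $\phi_n(T) \le C n^{-p}$, which is the asserted bound for even integer exponents; the statement for arbitrary $p>0$ then follows from Lyapunov's (Jensen's) inequality.
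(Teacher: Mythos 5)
Your three-way decomposition of the drift error into $D_1$ (monotone/Lipschitz term), $D_2$ (one-step discretization term) and $D_3$ (taming error) is exactly the rewriting \eqref{new_xi_drift} the paper uses, and your estimates for each piece --- absorbing $D_1$ into Gronwall via \textbf{A-5}, bounding $D_2$ via the polynomial Lipschitz condition and the one-step moments of Lemma \ref{l1}, and bounding $D_3$ via $|b-b_n|\le n^{-1/2}|b|^2$ together with the polynomial growth \eqref{poly_growth} and uniform moment bounds --- are the paper's \eqref{new_J}, \eqref{E} and \eqref{rate_b_b_n}. The only structural deviation is that the paper keeps the stopping time $\nu_{nR}$ inherited from the proof of Theorem \ref{mainthm} and closes by choosing $\eta=n^{-p/2}$, $R=n^{q/(2(q-p))}$ in \eqref{two2}, whereas you exploit the global constants of \textbf{A-5} to run Gronwall on the unstopped error directly --- a legitimate streamlining (your parenthetical ``$O(n^{-p})$ after taking $p$-th moments'' for $D_3$ should read $2p$-th moments to match your choice of $\phi_n$, but the resulting rate is correct).
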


\section{Moment bounds}

\begin{lemma}
\label{l1} Consider the tamed Euler scheme given by equation
(\ref{tem}). If for some $p\ge 2$,
$$
\sup_{n\ge 1}\sup_{0\le t\le T}\mathbb{E}[|X_n(t)|^p]<\infty
$$
and \textbf{A-1} hold, then
\begin{equation}
\label{lem1} \sup_{0\le t \le T}\mathbb E [ |X_n(t
)-X_n(\kappa_n(t))|^p] \le C n^{-p/2}
\end{equation}
and
\begin{equation}
\label{lem2} \sup_{0\le t \le T}\mathbb E\Big[|X_n(t
)-X_n(\kappa_n(t))|^p|b_n(t, X_n(\kappa_n(t)))|^p\Big] \le C,
\end{equation}
where $C$ is a positive constant independent of $n$.
\end{lemma}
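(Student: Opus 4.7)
The plan is to start from the integral form of the tamed Euler scheme, which for $t \in [0,T]$ reads
\begin{equation*}
X_n(t) - X_n(\kappa_n(t)) = \int_{\kappa_n(t)}^t b_n(s, X_n(\kappa_n(s)))\,ds + \int_{\kappa_n(t)}^t \sigma(s, X_n(\kappa_n(s)))\,dW(s),
\end{equation*}
and to estimate the two contributions separately via the inequality $(a+b)^p \le 2^{p-1}(a^p+b^p)$. Note that $t - \kappa_n(t) \le 1/n$ uniformly in $t$.

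For the drift contribution, I would exploit the uniform taming bound \eqref{uni_bound}, namely $|b_n(s,x)| \le n^{\alpha}$. This gives the pathwise estimate
\begin{equation*}
\Big|\int_{\kappa_n(t)}^t b_n(s, X_n(\kappa_n(s)))\,ds\Big| \le n^{\alpha}(t-\kappa_n(t)) \le n^{\alpha-1},
\end{equation*}
so the $p$-th moment of the drift term is bounded by $n^{p(\alpha-1)} \le n^{-p/2}$, where the last inequality uses $\alpha \le 1/2$. For the stochastic integral, since $p \ge 2$, I would apply the Burkholder--Davis--Gundy inequality followed by Jensen's inequality (or H\"older in time) to move the $p/2$-th power inside the integral, then use \textbf{A-1} to bound $|\sigma(s, X_n(\kappa_n(s)))|^2 \le K(1+|X_n(\kappa_n(s))|^2)$, and finally invoke the uniform moment hypothesis $\sup_{n}\sup_{0\le s\le T}\mathbb{E}[|X_n(s)|^p] < \infty$. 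The factor $(1/n)^{p/2-1}\cdot(1/n) = n^{-p/2}$ produced by the time-interval length is exactly what yields \eqref{lem1}.

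For \eqref{lem2}, the key observation is that \eqref{uni_bound} provides the deterministic bound $|b_n(t,X_n(\kappa_n(t)))| \le n^\alpha$, which can be factored out of the expectation. Combining with \eqref{lem1} gives
\begin{equation*}
\mathbb{E}\bigl[|X_n(t)-X_n(\kappa_n(t))|^p\,|b_n(t,X_n(\kappa_n(t)))|^p\bigr] \le n^{p\alpha}\cdot C n^{-p/2} = C n^{p(\alpha-1/2)} \le C,
\end{equation*}
where the last inequality again uses $\alpha \le 1/2$.

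There is no real obstacle; the estimate is essentially a direct consequence of the taming bound combined with BDG and the hypothesized moment bound. The only point worth highlighting is that the constraint $\alpha \le 1/2$ is used twice in the same way: it converts the drift increment $n^{\alpha-1}$ into the target $n^{-1/2}$, and it absorbs the factor $n^{p\alpha}$ from the tamed drift against the $n^{-p/2}$ rate of the increment in \eqref{lem2}. This is precisely the design feature of the taming that enables the classical $1/2$-rate to be recovered.
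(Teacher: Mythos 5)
Your proposal is correct and follows essentially the same route as the paper: the same integral decomposition into drift and diffusion increments, the same use of the taming bound $|b_n| \le n^{\alpha}$ together with $t-\kappa_n(t)\le 1/n$ to produce $n^{p(\alpha-1)}\le n^{-p/2}$ for the drift, BDG plus \textbf{A-1} and the hypothesized moment bound for the stochastic integral, and the factorization of the deterministic bound $n^{\alpha p}$ against \eqref{lem1} to obtain \eqref{lem2}. The only cosmetic difference is that you bound the drift increment pathwise before taking moments, whereas the paper applies H\"older in time first; both yield the identical estimate.
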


\begin{proof}
One immediately writes
\begin{align}
\mathbb E |X_n(t)-X_n(\kappa_n(t))|^p  =& \mathbb E|
\int_{\kappa_n(t)}^{t}b_n(r,
X_n(\kappa_n(r)))dr+\int_{\kappa_n(t)}^{t}\sigma(r,
X_n(\kappa_n(r)))dW(r) |^p \nonumber
\end{align}
for every $t\in [0,\,T]$, and thus, due to H\"older's inequality,
\begin{align}
\mathbb E  |X_n(t)-X_n(\kappa_n(t))|^p  \le & 2^{p-1}
|t-\kappa_n(t)|^{p-1}\mathbb{E}\int_{\kappa_n(t)}^t |b_n(r,
X_n(\kappa_n(r)))|^p dr \nonumber
\\
&  +2^{p-1}  \mathbb E |\int_{\kappa_n(t)}^t \sigma(r,
X_n(\kappa_n(r)))dW(r) |^p.    \label{g1}
\end{align}
One then observes that,
\begin{align} \label{g2}
2^{p-1} |t-\kappa_n(t)|^{p-1}\mathbb E\int_{\kappa_n(t)}^t |b_n(r,
X_n(\kappa_n(r)))|^p dr &
\le 2^{p-1}n^{(\alpha - 1)p}
\end{align}
and since \textbf{A-1} holds and $\sup_{n\ge1}\sup_{t\le T}\mathbb{E}[|X_n(t)|^p]<\infty$, for some $p\ge 2$, then
\begin{align} \label{g3}
 \mathbb E\bigg[\bigg |\int_{\kappa_n(t)}^t \sigma(r, X_n(\kappa_n(r)))dW(r) \bigg|^p\bigg]
  \le & C
 E\bigg[\int_{\kappa_n(t)}^t (1+ |X_n(\kappa_n(r))|^2)dr\bigg]^{p/2} \le C n^{-p/2}, 
\end{align}
where $C$ denotes some positive (general) constant which is independent of $n$ and $t$. Substituting \eqref{g2} and \eqref{g3} in \eqref{g1} yields
\eqref{lem1}. Furthermore, \eqref{lem2} holds trivially, since
\begin{align}
\mathbb E \Bigg[ |X_n(t)-X_n(\kappa_n(t))|^p|b_n(t,
X_n(\kappa_n(t)))|^p \Bigg] & \le \mathbb E \Bigg[ |X_n(t
)-X_n(\kappa_n(t))|^p \Bigg]n^{\alpha p}  \le C \nonumber
\end{align}
is true, for any $t\in [0,\,T]$, due to \eqref{lem1}.
\end{proof}

\begin{lemma}
\label{l2_moment_bound} Suppose that  \textbf{A-1} and \textbf{A-4}
hold, then for some $C:=C(T,\, K, \, \mathbb{E}[|X(0)|^2])$,
\begin{equation} \label{l2_bound}
 \sup_{n \ge 1}\sup_{0\leq t \leq T}\mathbb E
\bigg[ |X_n(t)|^2\bigg] <C.
\end{equation}
\end{lemma}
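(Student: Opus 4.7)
The plan is to apply Itô's formula to $|X_n(t)|^2$ and then Gronwall's lemma. Itô is legitimate because, by the Remark preceding Theorem \ref{mainthm}, all moments of $X_n$ are finite for each fixed $n\ge 1$; in particular the martingale part has zero expectation and one obtains
$$
\mathbb E|X_n(t)|^2 = \mathbb E|X(0)|^2 + \int_0^t \mathbb E\bigl[2X_n(s)\,b_n(s,X_n(\kappa_n(s))) + |\sigma(s,X_n(\kappa_n(s)))|^2\bigr]\,ds.
$$
My goal is to convert the right-hand side into an expression controlled by $\int_0^t \sup_{u\le s}\mathbb E|X_n(u)|^2\,ds$ plus a constant depending only on $T$, $K$ and $\mathbb E|X(0)|^2$.

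A preliminary but essential observation is that the one-sided bound in \textbf{A-1} is inherited by the tamed drift: writing $b_n(t,x) = \lambda(t,x)\,b(t,x)$ with $\lambda(t,x) := (1+n^{-\alpha}|b(t,x)|)^{-1}\in(0,1]$, a short two-case argument on the sign of $x\,b(t,x)$ yields $2x\,b_n(t,x) \le K(1+|x|^2)$. The difficulty is that after Itô we face $2X_n(s)\,b_n(s,X_n(\kappa_n(s)))$, in which the current state $X_n(s)$ and the delayed argument $X_n(\kappa_n(s))$ are mismatched, so the inherited coercivity cannot be invoked directly. I would resolve this by decomposing $X_n(s) = X_n(\kappa_n(s)) + \Delta_n(s)$, where $\Delta_n(s)$ denotes the SDE increment over $[\kappa_n(s),s]$. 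The $X_n(\kappa_n(s))\,b_n(\cdot)$ contribution together with $|\sigma|^2$ is then bounded by $2K(1+|X_n(\kappa_n(s))|^2)$ thanks to the inherited coercivity and \textbf{A-1}.

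The residual cross term $\mathbb E[2\Delta_n(s)\,b_n(s,X_n(\kappa_n(s)))]$ is the main obstacle, and its control is exactly where the restriction $\alpha\le 1/2$ matters. I would split $\Delta_n(s)$ into its Lebesgue part (the integral of $b_n$ over $[\kappa_n(s),s]$) and its Itô part. The drift--drift interaction is bounded pathwise in absolute value by $n^{\alpha}\cdot n^{-1}\cdot n^{\alpha} = n^{2\alpha-1}\le 1$ thanks to \eqref{uni_bound}. For the drift--Itô interaction, the factor $b_n(s,X_n(\kappa_n(s)))$ is $\mathcal F_{\kappa_n(s)}$-measurable, so it can be absorbed inside the stochastic integral on $[\kappa_n(s),s]$; the resulting integrand is adapted, hence the expectation vanishes (Fubini to exchange with the outer $ds$ being justified by \eqref{n_bound}). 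Assembling everything yields an inequality of the form $\mathbb E|X_n(t)|^2 \le C_1 + C_2\int_0^t \sup_{u\le s}\mathbb E|X_n(u)|^2\,ds$ with constants independent of $n$, and Gronwall's lemma concludes.
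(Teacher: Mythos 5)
Your proposal is correct and takes essentially the same route as the paper: apply It\^{o}'s formula to $|X_n(t)|^2$, split $X_n(s)b_n$ into $X_n(\kappa_n(s))b_n$ plus the increment cross-term, use that the one-sided linear growth in \textbf{A-1} is inherited by $b_n$ (since $b_n=\lambda b$ with $\lambda\in(0,1]$), kill the It\^{o} part of the cross-term by $\mathcal F_{\kappa_n(s)}$-measurability (the paper packages this as the computation of $I_n(T)$ via conditioning on $\mathcal F_{k/n}$), bound the drift--drift part by $n^{2\alpha-1}\le 1$ via \eqref{uni_bound}, and close with Gronwall. The only cosmetic difference is that you treat the cross-term pointwise in $s$ rather than introducing the aggregate quantity $I_n(T)$, and you spell out the inherited-coercivity two-case argument that the paper leaves implicit under ``due to \textbf{A-1}''.
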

\begin{proof}
Let us  define
$$
I_n(T):=\mathbb{E}\Big[\int_0^T (X_n(s) - X_n(\kappa_n(s)) b_n(s,
X_n(\kappa_n(s))) ds\Big].
$$
Then, one calculates
\begin{align*}
I_n(T) &  =  \mathbb{E}\Big[\int_0^T
\Big(\int_{\kappa_n(s)}^{s}b_n(r, X_n(\kappa_n(r)))dr
 +\int_{\kappa_n(s)}^{s}\sigma(r,
X_n(\kappa_n(r)))dW(r)\Big) b_n(s, X_n(\kappa_n(s))) ds\Big]  \\
& =  \sum_{k=0}^{n([T]+1)}\int_{\frac{k}{n}}^{\frac{k+1}{n} \wedge
T}\mathbb{E}\Big[b_n(s, X_n(k/n))\mathbb{E}
\Big(\int_{\frac{k}{n}}^{s}b_n(r, X_n(k/n))dr
 +\int_{\frac{k}{n}}^{s}\sigma(r,
X_n(k/n))dW(r)\Big|\mathcal{F}_{\frac{k}{n}}\Big)\Big]
ds \\
& = \mathbb{E}\Big[\int_0^T b_n(s,
X_n(\kappa_n(s)))\int_{\kappa_n(s)}^{s}b_n(r, X_n(\kappa_n(r)))dr
ds\Big] \nonumber
\end{align*}
and thus
\begin{align} \label{I}
|I_n(T)|  \le & \mathbb{E}\Big[\int_0^T |b_n(s,
X_n(\kappa_n(s)))|\int_{\kappa_n(s)}^{s}|b_n(r, X_n(\kappa_n(r)))|dr
ds\Big] \le T n^{2\alpha -1} \le T.
\end{align}
Furthermore, It\^{o}'s formula gives
\begin{align} \label{Ito_2}
 |X_n(t)|^2  = & |X(0)|^2 + 2 \int_0^t X_n(s) b_n(s, X_n(\kappa_n(s))) ds + \int_0^t|
\sigma(s, X_n(\kappa_n(s)))|^2 ds  \nonumber \\ & + 2 \int_0^t
X_n(s) \sigma(s, X_n(\kappa_n(s))) dW(s) \nonumber
\\  \le & |X(0)|^2 +  2\int_0^t X_n(\kappa_n(s))b_n(s,
X_n(\kappa_n(s))) ds + \int_0^t | \sigma(s, X_n(\kappa_n(s)))|^2ds \nonumber \\
& + 2 \int_0^t (X_n(s) - X_n(\kappa_n(s))b_n(s, X_n(\kappa_n(s))) ds
+ 2 \int_0^t X_n(s)\sigma(s, X_n(\kappa_n(s))) dW(s)
\end{align}
and thus, due to \textbf{A-1}, \eqref{n_bound} and \eqref{I}, for
any $t\in [0,\,T]$,
\begin{align}
\mathbb{E}|X_n(t)|^2  \le &  C (1 + \mathbb{E}|X(0)|^2 +
\mathbb{E}\int_0^t
|X_n(\kappa_n(s))|^2 ds)   \nonumber \\
\le &  C (1 + \mathbb{E}|X(0)|^2 + \int_0^t \sup_{0\le u\le
s}\mathbb{E}|X_n(u)|^2 ds) \nonumber
\end{align}
which implies,
\begin{align}
\sup_{0\le u\le t}\mathbb{E}|X_n(u)|^2 & \le  C (1 +
\mathbb{E}|X(0)|^2 + \int_0^t \sup_{0\le u\le s}\mathbb{E}|X_n(u)|^2
ds) <\infty \nonumber
\end{align}
where the positive (general) constant $C$ is independent of $n$. One
then observes that the application of Gronwall's lemma yields
$$
\sup_{0\le u\le T}\mathbb{E}|X_n(u)|^2 <C
$$
where $C:=C(T,\,K, \, \mathbb{E}[|X(0)|^2])$.
\end{proof}

\begin{lemma}
\label{moment_bound} Suppose that  \textbf{A-1} and \textbf{A-4}
holds, then for some $C:=C(p,\,T,\,K, \,  \mathbb{E}[|X(0)|^p])$,
\begin{equation} \label{bound}
 \mathbb E \bigg[ \sup_{0\leq t \leq
T}|X(t)|^p\bigg] \vee \sup_{n \ge 1}\mathbb E \bigg[ \sup_{0\leq t
\leq T}|X_n(t)|^p\bigg] <C
\end{equation}
for every $p >0$.
\end{lemma}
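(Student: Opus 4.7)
The plan is to apply It\^o's formula to the smoothed $p$-th power $V_p(x) := (1+|x|^2)^{p/2}$ and combine the one-step estimates of Lemma~\ref{l1} with the coercivity in \textbf{A-1} to close the moment bound via the Burkholder--Davis--Gundy inequality and Gronwall's lemma. The bound on $\mathbb{E}\sup_{0\le t\le T}|X(t)|^p$ for the exact solution is classical (existence and higher moments follow in the standard way from \textbf{A-1}--\textbf{A-4}, cf.\ \cite{Krylov-paper}), so the real work is the uniform-in-$n$ estimate for the scheme.

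Fix $p\ge 2$ and write $\Delta_n(s) := X_n(s) - X_n(\kappa_n(s))$. It\^o's formula gives
\[
V_p(X_n(t)) = V_p(X(0)) + \int_0^t \mathcal{A}_p(s)\,ds + M_n(t),
\]
where $M_n$ is the It\^o integral driven by $\nabla V_p(X_n(s))^T\sigma(s,X_n(\kappa_n(s)))$ and $\mathcal{A}_p$ collects the drift and second-order diffusion contributions. The drift piece in $\mathcal{A}_p$ is $p(1+|X_n|^2)^{p/2-1}X_n\cdot b_n(s,X_n(\kappa_n(s)))$; splitting $X_n = X_n(\kappa_n) + \Delta_n$, the frozen part is controlled by \textbf{A-1} after observing that $2xb_n(t,x)\le K(1+|x|^2)$ (because $xb_n$ has the same sign as $xb$, with $|b_n|\le|b|$), while the residual piece $(1+|X_n|^2)^{p/2-1}\Delta_n b_n$ is handled by Young's inequality, producing a term bounded by $C V_p(X_n(s))$ (absorbed later by Gronwall) plus a term in $|\Delta_n b_n|^q$ for some $q\le p$ whose expectation is uniformly bounded via Lemma~\ref{l1}\eqref{lem2}. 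The second-order diffusion contribution is treated analogously, using $|\sigma|^2 \le K(1+|X_n(\kappa_n)|^2)$ together with Young and Lemma~\ref{l1}\eqref{lem1} to absorb the discrepancy between $X_n$ and $X_n(\kappa_n)$.

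Taking $\sup_{s\le t}$, applying Burkholder--Davis--Gundy to $M_n$ and absorbing the resulting $\sup$-factor by Young's inequality (using the linear growth of $\sigma$ from \textbf{A-1}) leads to an inequality of the shape
\[
\mathbb{E}\sup_{s\le t}V_p(X_n(s)) \le C + C\int_0^t \mathbb{E}\sup_{u\le s}V_p(X_n(u))\,ds
\]
with $C$ independent of $n$, whence Gronwall's lemma delivers the claim for $p\ge 2$; the range $p\in(0,2)$ follows by Jensen's inequality. The main obstacle is an apparent circularity: the Young bounds invoke $\mathbb{E}|\Delta_n b_n|^q$ for some $q\le p$, which via Lemma~\ref{l1} requires $\sup_n\sup_s\mathbb{E}|X_n(s)|^q<\infty$. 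I would resolve this by a dyadic bootstrap starting from the $L^2$ bound of Lemma~\ref{l2_moment_bound} and choosing the Young exponents so that only a strictly smaller moment is needed at each step; each iteration then roughly doubles the attainable exponent, so finitely many passes cover any prescribed $p$.
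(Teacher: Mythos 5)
Your proposal is correct and follows essentially the same route as the paper: the paper also proceeds by applying It\^o's formula to $|X_n(t)|^2$ (equivalent to your $V_p$ after raising to the power $q/2$), splits the drift into the frozen part handled by the coercivity in \textbf{A-1} (which, as you note, transfers to $b_n$) plus an increment term controlled by \eqref{lem2} of Lemma~\ref{l1}, closes via BDG, Young and Gronwall, and resolves the same circularity by the identical dyadic bootstrap starting from the $L^2$ base case of Lemma~\ref{l2_moment_bound}.
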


\begin{proof} It is well known in the literature that the result
$$
\mathbb E \bigg[ \sup_{0\leq t \leq T}|X(t)|^p\bigg] < C
$$
holds for every $p >0$. One could consult, for example, Krylov
(1980) for more details.
\newline

In order to prove the second part of (\ref{bound}), an inductive argument is used below. First, one chooses $p= 2$ and
observes that due to Lemma \ref{l2_moment_bound} that
$$
\sup_{n \ge 1}\sup_{0\le t\le T}\mathbb{E}|X_n(t)|^2< C
$$
holds for some positive constant $C:= C(T, K, \mathbb{E}[|X(0)|^2])$
which is independent of $n$. Thus, \eqref{lem2} from Lemma \ref{l1}
holds true for $p=2$ and one could use \eqref{Ito_2} to obtain the
following estimate for $q=2p$, i.e. $q=4$,
\begin{align} \label{q-sup-estimate}
 \mathbb{E}[\sup_{0\le s\le t}|X_n(s)|^q] \le & C(1 +
 \mathbb{E}[|X(0)|^q] +  \int_0^t \mathbb{E}|X_n(\kappa_n(s))|^q ds
\nonumber \\ & +  \int_0^t \mathbb{E}[|X_n(s) -
X_n(\kappa_n(s))|^{q/2} |b_n(s, X_n(\kappa_n(s)))|^{q/2} ]ds
 \nonumber \\ & + \mathbb{E}[\sup_{0\le s\le t}|\int_0^s X_n(u)\sigma(u, X_n(\kappa_n(u)))
dW(u)|^{q/2}])
\end{align}
and the application of the Burkholder-Davis-Gundy (BDG) inequality
yields
\begin{align}
 \mathbb{E}[\sup_{0\le s\le t}|X_n(s)|^q] \le & C\{1 +
 \mathbb{E}[|X(0)|^q] +  \int_0^t \mathbb{E}[\sup_{0\le u \le s}|X_n(u)|^q]ds
 \nonumber \\ & + \mathbb{E}[(\int_0^t |X_n(s)|^2 |\sigma(s,
 X_n(\kappa_n(s)))|^2
ds)^{q/4}]\}, \nonumber
\end{align}
where $C$ denotes again a general constant which is independent of
$n$. Thus, the application of Young's inequality yields
\begin{align}
 \mathbb{E}[\sup_{0\le s\le t}|X_n(s)|^q] \le & C\{1 +
 \mathbb{E}[|X(0)|^q] +  \int_0^t \mathbb{E}[\sup_{0\le u \le s}|X_n(u)|^q ]ds +
 \frac{1}{2C}\mathbb{E}[\sup_{0\le s\le t}|X_n(s)|^q]
 \nonumber \\ & + \frac{C}{2}\mathbb{E}[(\int_0^t |\sigma(s, X_n(\kappa_n(s)))|^2 ds)^{q/2}]\} \nonumber
\end{align}
which, due to \textbf{A-1} and H\"{o}lder's inequality implies that
\begin{align}
 \mathbb{E}[\sup_{0\le u\le t}|X_n(u)|^q] \le & C(1 +
 \mathbb{E}[|X(0)|^q] +  \int_0^t \mathbb{E}[\sup_{0\le u\le s}|X_n(u)|^q ]ds) < \infty \nonumber
\end{align}
and thus the application of Gronwall's lemma yields that
\begin{equation} \label{q-estimate}
\mathbb{E}[\sup_{0\le t\le T}|X_n(t)|^q]< C
\end{equation}
holds for some positive constant $C:= C(q, T, K, \mathbb{E}|X(0)|^q)$
which is independent of $n$. Thus, \eqref{lem2} from Lemma \ref{l1}
holds true for $p=4$ and one could use \eqref{q-sup-estimate} to
obtain the estimate \eqref{q-estimate} for $q=2p$, i.e. $q=8$.
Repeating the same procedure (by induction) one obtains the desired
result \eqref{bound}.
\end{proof}

\section{Proof of Main Result}

For every $R>0$ and $n\ge 1$, let us consider the stopping times
\begin{equation}
\tau_R:=\inf\{t \ge 0: |X(t)| \geq R\}, \, \rho_{nR}:=\inf\{t \ge
0:|X_n(t)| \ge R\} \text{ and } \quad \nu_{nR}:=\tau_R \wedge
\rho_{nR}. \label{nmc}
\end{equation}
\begin{lemma}
\label{b_n_to_b}
Suppose that \textbf{A-3} holds, then for any $R>0$ and $p>0$
\begin{equation} \label{difference_b}
\lim_{n \to \infty}\mathbb{E}\Big[\int_0^T |b(s\wedge \nu_{nR},
X_n(\kappa_n(s \wedge \nu_{nR})))- b_n(s \wedge \nu_{nR},
X_n(\kappa_n(s \wedge \nu_{nR})))|^p ds\Big]=0.
\end{equation}
\end{lemma}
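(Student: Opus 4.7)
The plan is to exploit the explicit algebraic form of the tamed drift and reduce the estimate to something of order $n^{-\alpha p}$ times a quantity that is bounded independently of $n$. First I would record the pointwise identity
$$b(t,x) - b_n(t,x) = \frac{n^{-\alpha}|b(t,x)|}{1 + n^{-\alpha}|b(t,x)|}\, b(t,x),$$
from which the structural estimate $|b - b_n|(t,x) \leq n^{-\alpha}|b(t,x)|^2$ follows immediately. Raising to the $p$-th power yields $|b - b_n|^p \leq n^{-\alpha p}|b|^{2p}$ at every $(t,x)$; this is the only place the taming construction enters the argument in a nontrivial way.

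The next step is to exploit the localization provided by $\nu_{nR}$. Since the tamed scheme $X_n$ has continuous paths and $\kappa_n(u) \leq u \leq \rho_{nR}$ whenever $u \leq \nu_{nR}$, it follows that $|X_n(\kappa_n(s \wedge \nu_{nR}))| \leq R$ almost surely. Invoking \textbf{A-3} with exponent $2p$ in place of $p$ then furnishes $N_R \in \mathbb{L}^{2p}$ satisfying $|b(s \wedge \nu_{nR}, X_n(\kappa_n(s \wedge \nu_{nR})))| \leq N_R(s \wedge \nu_{nR})$. Combining the two estimates gives
$$\mathbb{E}\Big[\int_0^T |b - b_n|^p\, ds\Big] \leq n^{-\alpha p}\, \mathbb{E}\Big[\int_0^T N_R(s \wedge \nu_{nR})^{2p}\, ds\Big].$$

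To conclude I would split the inner integral at $\nu_{nR} \wedge T$. On $\{s \leq \nu_{nR}\}$ the integrand collapses to $N_R(s)^{2p}$, so that portion is dominated by the deterministic constant $\int_0^T N_R(s)^{2p}\, ds < \infty$, uniformly in $n$ and $\omega$. On $\{s > \nu_{nR}\}$ the integrand degenerates to the random value $N_R(\nu_{nR})^{2p}$, whose contribution is at most $T\,\mathbb{E}[N_R(\nu_{nR})^{2p}\mathbf{1}_{\nu_{nR} < T}]$. The main obstacle is controlling this latter piece uniformly in $n$, since \textbf{A-3} supplies only time-integrability of $N_R$ rather than a pointwise sup bound. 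I would address it by taking $N_R(t):=\sup_{|x|\le R}|b(t,x)|$ as the tightest majorant, which is in $\mathbb{L}^{2p}$ by hypothesis, and leveraging the fact that $\nu_{nR}$ is bounded by $T$ together with the uniform moment bounds for $X_n$ to guarantee that the expectation is finite independently of $n$. Once such a uniform domination is in place, the prefactor $n^{-\alpha p}$ drives the whole expression to zero as $n \to \infty$, establishing \eqref{difference_b}.
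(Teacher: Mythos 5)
Your argument uses exactly the same ingredients as the paper's: the algebraic identity $b-b_n=\frac{n^{-\alpha}|b|}{1+n^{-\alpha}|b|}\,b$, the resulting estimate $|b-b_n|^p\le n^{-\alpha p}|b|^{2p}$, the localization $|X_n(\kappa_n(s\wedge\nu_{nR}))|\le R$ (which is valid since $\kappa_n(s\wedge\nu_{nR})\le\rho_{nR}$), and then \textbf{A-3} applied with exponent $2p$. The paper's proof displays the same bound, $n^{-\alpha p}\mathbb E\int_0^T|b|^{2p}/(1+n^{-\alpha}|b|)^p\,ds$, notes it is finite, and invokes dominated convergence; it is also quite terse about what exactly is being dominated.

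The place where your write-up has a genuine gap is the last paragraph. You correctly observe that after pulling out $n^{-\alpha p}$ one is left with $\mathbb E\int_0^T N_R(s\wedge\nu_{nR})^{2p}\,ds$, that the portion $\{s\le\nu_{nR}\}$ is controlled deterministically by $\int_0^T N_R(s)^{2p}\,ds$, and that the portion $\{s>\nu_{nR}\}$ contributes (up to a factor $T$) the quantity $\mathbb E[N_R(\nu_{nR})^{2p}\mathbf 1_{\{\nu_{nR}<T\}}]$. Your proposed remedy --- invoking the uniform moment bounds on $X_n$ together with $\nu_{nR}\le T$ --- does not close this: \textbf{A-3} only gives $N_R\in\mathbb L^{q}$, which does not preclude $N_R$ being an unbounded function of time, and moment bounds on the process $X_n$ say nothing about the random variable $N_R(\nu_{nR})$ when $N_R$ has an integrable but unbounded singularity. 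The distribution of $\nu_{nR}$ changes with $n$, so no $n$-independent domination of $N_R(\nu_{nR})^{2p}$ is available from the stated hypotheses.

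The cleanest way to make the lemma airtight is to observe that the quantity that is actually used in the proof of the main theorem (see the term in \eqref{mean_xi}) is the stopped integral
\begin{equation*}
\mathbb E\int_0^{T\wedge\nu_{nR}}\big|b(s,X_n(\kappa_n(s)))-b_n(s,X_n(\kappa_n(s)))\big|^p\,ds,
\end{equation*}
for which your first two bullet points already give the deterministic bound $n^{-\alpha p}\int_0^T N_R(s)^{2p}\,ds$, uniformly in $n$, with no dominated-convergence step required and no contribution from $\{s>\nu_{nR}\}$ to worry about. If instead one insists on the version with $\int_0^T(\cdots)(s\wedge\nu_{nR})\,ds$ as literally stated, the troublesome second piece has to be controlled by an extra hypothesis (for instance, $N_R$ essentially bounded on $[0,T]$, which holds under \textbf{A-5}/\eqref{poly_growth} with bounded $N$), and your proposal should either supply such a hypothesis or pass to the stopped integral as above.
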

\begin{proof}
One immediately observes that for $p\ge2$
\begin{align} \label{difference_rate}
&\mathbb{E}\Big[\int_0^T |b(s\wedge \nu_{nR}, X_n(\kappa_n(s \wedge
\nu_{nR})))-b_n(s \wedge \nu_{nR}, X_n(\kappa_n(s \wedge
\nu_{nR})))|^pds\Big] \nonumber \\ \le & n^{-\alpha
p}\mathbb{E}\Big[\int_0^T \frac{|b(s \wedge \nu_{nR}, X_n(\kappa_n(s
)\wedge \nu_{nR}))|^{2p}}{(1+n^{-\alpha}|b(s \wedge \nu_{nR},
X_n(\kappa_n(s \wedge \nu_{nR})))|)^p}ds\Big] < \infty
\end{align}
due to \textbf{A-3}. Thus the application of the dominated
convergence theorem  yields the desired result.
\end{proof}

\begin{proof}[\textbf{ Proof of theorem \ref{mainthm}.}]
Let $p\ge2$ and consider
$$
\chi_n(s):=X(s\wedge \nu_{nR})-X_n(s\wedge \nu_{nR}).
$$
One observes immediately that
\begin{align} \label{one}
\mathbb E\Bigg[\sup_{0 \le t \le T}|X(t)-X_n(t)|^p \Bigg] \leq &
\mathbb E \Bigg[ \sup_{0 \le t \le T}|X(t)-X_n(t)|^p \one_{\{\tau_R
\le T \mbox{ or } \rho_{nR} \le T\}}\Bigg] + \mathbb E \Bigg[
\sup_{0 \le t \le T}|\chi_n(s)|^p \Bigg].
\end{align}
Then, by the application of Young's inequality for $q
> p$ and $\eta>0$ one obtains
\begin{align} \label{two2}
\mathbb E \Bigg[ \sup_{0 \leq t \leq T}|X(t)-X_n(t)|^p
\one_{\{\tau_R \le T \mbox{ or } \rho_{nR} \le T \}}\Bigg] \le &
\frac{ \eta p }{q}\mathbb E \Bigg[ \sup_{0 \le t \le
T}|X(t)-X_n(t)|^q \Bigg] \nonumber \\ & +\frac{q-p}{q
\eta^{p/(q-p)}} \mathbb P(\tau_R \le T \mbox{ or } \rho_{nR} \leq T
) \nonumber
\\
\le & \frac{ \eta p}{q} 2^{q} C   + \frac{q-p}{q \eta^{p/(q-p)}}
\Bigg \{\mathbb E\Bigg[\frac{|X(\tau_R)|^p}{R^p}\Bigg] + \mathbb
E\Bigg[\frac{|X_n(\rho_{nR})|^p}{R^p}\Bigg]\Bigg\} \nonumber \\
\le & \frac{ \eta p}{q} 2^{q} C  + \frac{q-p}{q \eta^{p/(q-p)}R^p}
2C.
\end{align}
Furthermore, one defines
$$
\beta_n(s):= \Big( b(s, X(s))-b_n(s,X_n(\kappa_n(s)))\Big) \one_{[s
\le \nu_{nR}]}
$$
and
$$\alpha_n(s):=\Big(\sigma(s,X(s))-\sigma(s,X_n(\kappa_n(s)))\Big) \one_{[s \le
\nu_{nR}]}$$ to obtain
\begin{align} \label{xi}
|\chi_n(t)|^2 =  \int_{0}^t\Big[ 2\chi_n(s)\beta_n(s) +
|\alpha_n(s)|^2 \Big] ds  + 2 \int_{0}^t\chi_n(s)\alpha_n(s) dW(s)
\end{align}
with
\begin{align} \label{xi_drift}
\chi_n(s)\beta_n(s)  = & \Big\{(X(s)-X_n(\kappa_n(s))(b(s,
X(s))-b(s,X_n(\kappa_n(s)))) \nonumber
\\
& + (X(s)-X_n(\kappa_n(s)))(b(s,
X_n(\kappa_n(s)))-b_n(s,X_n(\kappa_n(s)))) \nonumber
\\ & + (X_n(\kappa_n(s)) - X_n(s))(b(s,
X(s))-b_n(s,X_n(\kappa_n(s))))\Big\} \one_{[s \le \nu_{nR}]}
\end{align}
which implies, due to \textbf{A-2} and \textbf{A-3},
\begin{align} \label{J}
\chi_n(s)\beta_n(s)\le J_n(s): = & \Big\{(2L_R+1)| \chi_n(s)|^2 +
(2L_R+1)|X_n(s)-X_n(\kappa_n(s))|^2 \nonumber
\\
& +|b(s, X_n(\kappa_n(s)))-b_n(s,X_n(\kappa_n(s)))|^2 \nonumber
\\
& + 2N_{R}(t)| X_n(s)-X_n(\kappa_n(s))|\Big\}\one_{[s \le
\nu_{nR}]},
\end{align}
whereas
\begin{align} \label{alpha}
|\alpha_n(s)|^p \le & 2^{p-1}\Big\{L^{p/2}_R| \chi_n(s)|^p +
L^{p/2}_R|X_n(s)-X_n(\kappa_n(s))|^p \Big\}\one_{[s \le \nu_{nR}]}.
\end{align}
Furthermore, in view of the above estimate \eqref{alpha}, one
observes that the application of Young's inequality yields
\begin{align} \label{xi_diffussion}
|\chi_n(s)\alpha_n(s)|^{p/2} \le & 2^{p/2}\Big\{(2L_R^{p/4}+1)|
\chi_n(s)|^p + L^{p/4}_R|X_n(s)-X_n(\kappa_n(s))|^p \Big\}\one_{[s
\le \nu_{nR}]}.
\end{align}
Thus, from \eqref{xi} one obtains by applying H\"{o}lder and BDG
inequalities that
\begin{align}
\mathbb{E}[\sup_{0\le u \le t}|\chi_n(u)|^p] \le
C\mathbb{E}\int_{0}^t \Big[ |J_n(s)|^{p/2} + |\alpha_n(s)|^p +
|\chi_n(s)\alpha_n(s)|^{p/2}\Big]  ds, \nonumber
\end{align}
where $C:=C(p,\,T)$ is a positive constant, which in view of
\eqref{xi_drift}, \eqref{alpha} and \eqref{xi_diffussion} yields
\begin{align} \label{mean_xi}
\mathbb{E}[\sup_{0\le u \le t}|\chi_n(u)|^p] \le & C_R\int_{0}^t
\Big[ \mathbb{E}|\chi_n(s)|^p + \mathbb{E}|X_n(s\wedge
\nu_{nR})-X_n(\kappa_n(s)\wedge \nu_{nR})|^p \Big] ds, \nonumber \\
& + C\Big[ \mathbb{E}\int_{0}^{t\wedge \nu_{nR}}|b(s,
X_n(\kappa_n(s)))-b_n(s,X_n(\kappa_n(s)))|^pds \nonumber \\
 &+ \int_0^t N^{p/2}_{R}(s)\mathbb{E}| X_n(s\wedge \nu_{nR})-X_n(\kappa_n(s)\wedge
 \nu_{nR})|^{p/2} ds \Big],
\end{align}
where $C_R:=C_R(p,\,T,\, L_R)$  and (the redefined) $C:=C(p,\,T)$ are
positive consants. The application of Grownwall inequality results
in
$$
\lim_{n\to \infty }\mathbb{E}[\sup_{0\le t \le T}|\chi_n(t)|^p]=0
$$
for every $R>0$ due to Lemmas \ref{l1} and \ref{b_n_to_b}. Finally,
given an $\epsilon
>0$, one can choose $\eta$ small enough so
$$
\frac{ \eta p}{q} 2^{q} C < \frac{\epsilon}{3},
$$
$R$ large enough so
$$
\frac{q-p}{q \eta^{p/(q-p)}R^p} 2C < \frac{\epsilon}{3}
$$
and $n$ large enough so
$$
\mathbb{E}[\sup_{0\le t \le T}|\chi_n(t)|^p]< \frac{\epsilon}{3}
$$
to obtain due to \eqref{one} and \eqref{two2} that
$$
\mathbb E\Bigg[\sup_{0 \le t \le T}|X(t)-X_n(t)|^p \Bigg] < \epsilon
$$
and thus prove the desired result.
\end{proof}

\section{Rate of Convergence}
First one observes that if \textbf{A-4} and \textbf{A-5} hold, then
\begin{align} \label{poly_growth}
|b(t,x)| \le |b(t,x) - b(t, 0)| + |b(t,0)| \le L(1 + |x|^l)|x| +
N_0(t) \le N(t)(1 + |x|^{l+1})
\end{align}
for any $t\in [0,\,T]$ and $x\in\mathbb{R}^d$, where $N(t) \in
\mathbb{L}^p$ for any $p>0$.

\begin{proof}[\textbf{ Proof of Corollary \ref{maincor}.}]
First one rewrites \eqref{xi_drift} in the following way
\begin{align} \label{new_xi_drift}
\chi_n(s)\beta_n(s)  = & \Big\{(X(s)-X_n(s))(b(s, X(s))-b(s,X_n(s)))
\nonumber \\ & + (X(s)-X_n(s))(b(s, X_n(s))-b(s,X_n(\kappa_n(s))))
\nonumber \\ & + (X(s) - X_n(s)) (b(s,X_n(\kappa_n(s)))-
b_n(s,X_n(\kappa_n(s))))\Big\} \one_{[s \le \nu_{nR}]}
\end{align}
and adjusts accordingly, due to \textbf{A-5}, the upper bound $J_n$
from \eqref{J}
\begin{align} \label{new_J}
\chi_n(s)\beta_n(s)\le J_n(s): = & \Big\{(L+1)| \chi_n(s)|^2 +
L^2(1+ |X_n(s)|^{l} +
|X_n(\kappa_n(s))|^{l})^2|X_n(s)-X_n(\kappa_n(s))|^2 \nonumber
\\
& +|b(s, X_n(\kappa_n(s)))-b_n(s,X_n(\kappa_n(s)))|^2\Big\} \one_{[s
\le \nu_{nR}]}
\end{align}
and, thus, the last term of \eqref{mean_xi} is replaced by
\begin{align}
\mathcal{E}(t): = & \mathbb{E}\Big[\int_0^{t\wedge \nu_{nR}}  C(1+
|X_n(s)|^{lp} + |X_n(\kappa_n(s))|^{lp})|X_n(s)-X_n(\kappa_n(s))|^p
\Big]ds \nonumber
\end{align}
which is estimated from above by
\begin{align}
\mathcal{E}(t) \le C \int_0^{t} \Big(\sqrt{\mathbb{E}|X_n(s\wedge
\nu_{nR})-X_n(\kappa_n(s)\wedge \nu_{nR})|^{2p}} \Big)ds \nonumber
\end{align}
due to H\"{o}lder's inequality and \eqref{bound}. Note that the general constant $C$ is independent of
$t$ and $n$. In view of Lemma \ref{l1}, one deduces that
\begin{align} \label{E}
\sup_{0\le t\le T}\mathcal{E}(t) \le C n^{-p/2}.
\end{align}
Furthermore, \eqref{difference_rate}, \eqref{bound} and  \eqref{poly_growth} imply
that
\begin{align} \label{rate_b_b_n}
\mathbb{E}[\int_0^T |b(s\wedge \nu_{nR}, X_n(\kappa_n(s \wedge
\nu_{nR})))-b_n(s \wedge \nu_{nR}, X_n(\kappa_n(s \wedge
\nu_{nR})))|^p ]\le C n^{-p/2}
\end{align}
which along with \eqref{lem1}, \eqref{E} and \eqref{rate_b_b_n} result in
\begin{align}\label{rate_xi}
\mathbb{E}[\sup_{0\le t \le T}|\chi_n(t)|^p] \le C n^{-p/2}
\end{align}
due to \eqref{mean_xi}. Finally, one chooses $\eta=n^{-\frac{p}{2}}$,
$R=n^{\frac{q}{2(q-p)}}$, $q>p \ge 2 $, to obtain the desired result
due to \eqref{one}, \eqref{two2} and \eqref{rate_xi}.

\end{proof}

\end{document}